\theoremstyle{plain}
\newtheorem{theorem}{Theorem}[section]
\newtheorem{lemma}[theorem]{Lemma}
\theoremstyle{definition}
\newtheorem{definition}[theorem]{Definition}
\theoremstyle{remark}
\newtheorem{remark}{Remark}
\newcommand{\abs}[1]{\left|#1\right|}
\begin{document}

\title[Smooth solutions]
      {Smooth solutions for a ${p}$-system \\ of mixed type}

\date{21 March 2012}
\author{Misha Bialy}
\address{School of Mathematical Sciences, Raymond and Beverly Sackler Faculty of Exact Sciences, Tel Aviv University,
Israel} \email{bialy@post.tau.ac.il}
\thanks{Partially supported by ISF grant 128/10}

\subjclass[2000]{35L65,35L67,70H06 } \keywords{genuine nonlinearity,
blow-up, polynomial integrals}

\begin{abstract}
In this note we analyze smooth solutions of a $p$-system of the
\textit{mixed} type. Motivating example for this is a 2-components
reduction of the Benney moments chain which appears to be connected
to theory of integrable systems. We don't assume a-priory that the
solutions in question are in the Hyperbolic region. Our main result
states that the only smooth solutions of the system which are
periodic in $x$ are necessarily constants. As for initial value
problem we prove that if the initial data is strictly hyperbolic and
periodic in $x$ then the solution can not extend to $[t_0;+\infty)$
and shocks are necessarily created.

\end{abstract}

\maketitle

\section{Motivation and the result}
\label{sec:intro} Consider the \textit{p}-system
\begin{equation}\label{1}
u_t=-v_x ,\  v_t=(p(u))_x
\end{equation}
We refer to \cite {Serre} and \cite {Slemrod} for general theory of
these systems.

The general case considered in this note is somewhat different. We
assume that $p$ is a $C^2$-function, which is quadratic like, that
is everywhere strictly convex
$$p''(u)>0,$$  and has a minimum. We shall assume that the minimum value is zero at $u=0.$
Notice that this is not a restriction and can be achieved by
changing $u$ and $p(u)$ by constants. Thus the system is of a
\textit{mixed type} depending on the sign of $u$ (see below), so
that one needs to analyze the solutions in Hyperbolic and Elliptic
regions.

Main example for me which motivates in fact the result of this paper
is $p(u)=u^2/2$. In this case the system appears to be a reduction
of the famous Benney chain and turns out to be related to a problem
of Polynomial integrals for Hamiltonian systems with potential. This
relation was first observed in \cite {Kozlov} and smooth periodic
solutions were studied in \cite{B0}, \cite{B1}. More general Rich
quasi-linear systems of order $\geq 2$ were recently discovered in
connection with other integrable Hamiltonian systems \cite{BM1},
\cite{BM2}. The main question arising in these applications is if
there exist global periodic smooth solutions of the corresponding
quasi-linear systems. It is an open problem how to apply the ideas
of this note to study smooth periodic solutions for these systems of
order $\geq 2$.

In this note we prove non-existence of classical solutions $(u(t,x),
v(t,x))$ of the system, which are $C^2$-smooth and periodic in $x$.
\begin{remark}
Notice that for solutions $u,v$ of the system it follows that  $u$
must satisfy the equation $u_{tt}+(p(u))_{xx}=0$. However the
requirement of periodicity in $x$ of both $u$ and $v$ is a stronger
assumption then just to ask  $u$ to be periodic. This can be seen on
the following example. Take $p(u)=u^2/2$ and notice that $u=t$ is a
periodic in $x$ solution of $u_{tt}+(uu_x)_x=0$ however the
corresponding $v$ for the system is $v=-x$ which is not periodic in
$x$.
\end{remark}

The type of the system is governed by the sign of the function
$p'(u)$. It is hyperbolic for $p'(u)<0$, that is for $u<0$, and
elliptic for $p'(u)>0$, that is for $u>0$. I shall denote by
$$U_+=\{(t,x):u(t,x)>0\}, U_-=\{(t,x):u(t,x)<0\},$$
the elliptic and the hyperbolic domains on the cylinder
respectively.

Our main results are given in the following two theorems below:
\begin{theorem}
Let the pair $(u,v)$ be a solution of the $p$-system of class $C^2$
on the semi-infinite cylinder $t \geq t_0$. If the initial data for
$t=t_0$ is hyperbolic, i.e. $u(t_0,x)< 0$ for all $x$ then it is
hyperbolic everywhere and moreover the solution is constant,
$$(u,v)=const.$$
\end{theorem}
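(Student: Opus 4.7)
The plan is to apply the Lax-type Riccati blow-up argument to the Riemann invariants of the system in the hyperbolic region. For $u<0$ set $c(u):=\sqrt{-p'(u)}$ and introduce
\[
r:=v+\int_0^u c(w)\,dw,\qquad s:=v-\int_0^u c(w)\,dw,
\]
so that the system decouples into $r_t+c\,r_x=0$ and $s_t-c\,s_x=0$. Since $p''>0$ gives $c'(u)<0$, the system is genuinely nonlinear in the sense of Lax. A short computation using the identities $\partial_+u=-s_x$ and $\partial_-u=-r_x$ shows that the Lax-rescaled quantities $\tilde R:=\sqrt{c(u)}\,r_x$ and $\tilde S:=\sqrt{c(u)}\,s_x$ satisfy the decoupled Riccati equations
\[
\partial_+\tilde R=k(u)\,\tilde R^2,\qquad \partial_-\tilde S=k(u)\,\tilde S^2,\qquad k(u):=\frac{p''(u)}{4\,c(u)^{5/2}}>0,
\]
along the characteristic fields $\partial_\pm:=\partial_t\pm c(u)\partial_x$.

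Now argue by contradiction: assume $(u,v)$ is $C^2$ on $[t_0,+\infty)\times S^1$ and is not constant. Then at $t=t_0$ at least one of $r_x, s_x$ does not vanish identically, and $L$-periodicity in $x$ forces that function to have zero mean, hence to be strictly positive at some point $x_*$. Say $\tilde R(t_0,x_*)>0$. Follow the $+$-characteristic $t\mapsto x(t)$ issued from $x_*$; explicit integration of the Riccati ODE yields
\[
\tilde R\bigl(t,x(t)\bigr)=\left(\frac{1}{\tilde R(t_0,x_*)}-\int_{t_0}^t k\bigl(u(\tau,x(\tau))\bigr)\,d\tau\right)^{-1},
\]
which blows up at the first time the denominator vanishes, provided $k(u)$ admits a positive lower bound along the characteristic. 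This contradicts the hypothesis that $(u,v)\in C^2([t_0,+\infty)\times S^1)$. The case in which only $s_x$ is non-trivial at $t_0$ is handled by the symmetric argument for $\tilde S$ along the $-$-characteristic.

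Once this contradiction is reached, $r_x\equiv s_x\equiv 0$, so both $u$ and $v$ are independent of $x$; the system then collapses to $u_t=-v_x=0$ and $v_t=(p(u))_x=0$, whence $(u,v)$ is constant. Because the initial datum lies in $\{u<0\}$, the constant is hyperbolic, and hyperbolicity persists trivially for all $t\geq t_0$.

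The step I expect to be the main technical obstacle is the justification of the positive lower bound for $k(u)$ along the forward $+$-characteristic, equivalently the prevention of $u(\tau,x(\tau))$ from escaping to $-\infty$ as $\tau\to+\infty$. To handle this I would exploit the conservation law $\frac{d}{dt}\int_0^L u\,dx=0$ (immediate from $u_t=-v_x$ and periodicity) together with the identity $\partial_+u=-s_x$ to confine $u$ along the characteristic to a compact subset of $(-\infty,0]$; and observe that if $u(\tau,x(\tau))\to 0^-$, then $k(u)\to+\infty$, so the limit cooperates with rather than obstructs the blow-up.
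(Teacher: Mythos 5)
Your setup (Riemann invariants, the Lax rescaling $\tilde R=\sqrt{c}\,r_x$, the Riccati equation, and the observation that periodicity forces $r_x>0$ somewhere unless $r$ is constant) matches the paper's machinery, and you have correctly located the crux: the Riccati blow-up only yields a contradiction if $\int_{t_0}^{t}k(u)\,d\tau$ actually reaches $1/\tilde R(t_0,x_*)$, which fails precisely when $u\to-\infty$ along the forward characteristic fast enough that $k(u)=p''(u)/(4(-p'(u))^{5/4})$ becomes integrable in $t$. But the tool you propose for closing this gap does not work. Conservation of $\int_{S^1}u\,dx$ gives only an $L^1$-in-$x$ bound on $u$ at each time; since there is no a priori control on $u_x$, the solution can in principle develop an arbitrarily deep, arbitrarily narrow negative spike travelling with the characteristic, so the mean value does not confine $u(\tau,x(\tau))$ to a compact subset of $(-\infty,0)$. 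Likewise, the identity $\partial_+u=-s_x$ does not help, because $s_x$ is exactly the quantity you have no control over on the characteristic you are following. (A second, smaller gap: in the case where the characteristic reaches $\{u=0\}$ in finite time, "$k\to+\infty$" is not enough; one needs the quantitative estimate $|u(t,x(t))|\le C_1|t-t_+|$ and $|p'(u)|\le C_2|u|$, giving $k\gtrsim|t-t_+|^{-5/4}$, to conclude that $\int k$ actually diverges.)

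The paper's resolution of the $u\to-\infty$ scenario is genuinely different and is not a per-characteristic estimate: it is a two-family, nonlocal argument (Lemma 2.2). If a $\lambda_1$-characteristic and a $\lambda_2$-characteristic were both of type $B_+$ (i.e.\ $-u\to+\infty$ forward in time along each), then, using periodicity to translate one of them by integers and the opposite signs of their speeds, they must intersect at points $P_k$ with $t_k\to+\infty$ where $-u\to+\infty$; but $q(u)=(r_2-r_1)/2$ and both Riemann invariants are bounded by their values on $\{t=t_0\}$ and conserved along characteristics, so $q(u(P_k))$ stays bounded --- contradiction. Hence at least one full family consists of $A_+$ characteristics, for which Lemma 2.1 gives $(r_i)_x(t_0,\cdot)\le 0$, and periodicity then forces that Riemann invariant to be constant; after that, $u$ is a function of the other invariant alone, is constant along the other family's characteristics (so cannot tend to $-\infty$ there either), and the same argument applies to the second family. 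Note also that the conclusion is asymmetric --- you only get one family under control at first, not both --- which is why your symmetric "handle $\tilde S$ the same way" step cannot be run independently. To repair your proof you would need to replace the mean-conservation confinement by an argument of this interaction type (or by tracing both Riemann invariants back to $t=t_0$ to bound $q(u)$ from above on the hyperbolic strip).
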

Let me emphasize that this is important for the proof of this
theorem that initially the solution has hyperbolic type and it is
possible, though I don't know any single example, that there exist
smooth solutions of the \textit{mixed} type on the semi-infinite
cylinder. On the other hand, we prove that if one considers the
whole infinite cylinder one gets an additional rigidity and no
smooth solutions of the system other than constants can exist:
\begin{theorem}
Let $(u,v)$ be a $C^2$-smooth solution of the $p$-system on the
whole cylinder. Then both $u$ and $v$ are constant functions.

\end{theorem}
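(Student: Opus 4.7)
The natural plan is to deduce Theorem 1.2 from Theorem 1.1 using a time-reversal symmetry of \eqref{1}, and then handle the residual case in which the reduction does not apply directly. The system \eqref{1} is invariant under $(t,u,v)\mapsto(-t,u,-v)$: if $(u(t,x),v(t,x))$ solves \eqref{1}, so does $(u(-t,x),-v(-t,x))$. Applying Theorem 1.1 to the latter yields its backward-in-time counterpart (constancy on $(-\infty,t_0]$ under the same hypothesis $u(t_0,x)<0$), and combining with Theorem 1.1 itself gives the key reduction: \emph{if $u(t_*,x)<0$ for every $x\in S^1$ at some time $t_*$, then $(u,v)$ is constant on the whole cylinder}. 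It therefore suffices to rule out non-constant $C^2$ solutions on the full cylinder satisfying $\max_x u(t,\cdot)\ge 0$ for every real $t$.

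To rule out this residual case, I would upgrade the Riccati/Lax-type computation presumably underlying Theorem 1.1 by exploiting the infinite past now available. In $U_-$, introduce Riemann invariants $R,S$ advected by the characteristic speeds $\pm\sqrt{-p'(u)}$; the genuine nonlinearity $p''>0$ yields, along each characteristic, a Riccati ODE of the form $\dot R_x+\alpha(u)\,R_x^2=0$ with $\alpha(u)>0$. Along any characteristic that remains in $U_-$ for all real $t$, the substitution $y=1/R_x$ makes $\dot y=\alpha(u)>0$ a globally monotone function of $t$, which is impossible unless $R_x\equiv 0$; symmetrically $S_x\equiv 0$, so $(u,v)$ is constant along the world-line of any such characteristic, and the reduction of the first paragraph then concludes the proof.

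The main obstacle is that a characteristic through a hyperbolic point need not remain in $U_-$: it can cross into the elliptic region $U_+$ in finite time, where the characteristic direction becomes complex. The technical heart of the proof is therefore to show that, under the standing assumption $\max_x u(t,\cdot)\ge 0$ for all $t$, there still exists a characteristic globally defined in $U_-$. I would attempt this by combining the conservation laws $\tfrac{d}{dt}\int u\,dx=0$, $\tfrac{d}{dt}\int v\,dx=0$, $\tfrac{d}{dt}\int(\tfrac12 v^2+P(u))\,dx=0$ (with $P'=p$) and the periodicity in $x$ to obtain uniform $C^0$ control on $(u,v)$, and then running a careful characteristic-tracking argument across the doubly-infinite time axis. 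The purely elliptic sub-case $u>0$ everywhere, where no real characteristic is available, is perhaps best handled separately via the pseudo-holomorphic reformulation $w_t=\I\sqrt{p'(u)}\,w_x$ obtained by setting $w=\int_0^u\sqrt{p'(s)}\,ds+\I v$; here periodicity in $x$ together with an infinite strip in $t$ should force rigidity by a Liouville-type argument.
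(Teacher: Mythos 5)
Your opening reduction is correct and is a genuinely different (and efficient) way to package the first theorem: the symmetry $(t,u,v)\mapsto(-t,u,-v)$ does preserve the system, so Theorem 1.1 plus its time-reversal kills any solution possessing one fully hyperbolic time slice. But both halves of your treatment of the residual case have real gaps. In the mixed-type situation you stake everything on producing a characteristic that stays in $U_-$ for all real $t$, and you concede you do not know how to produce one; in general none need exist (characteristics typically reach $\partial U_-$ in finite time because $\sqrt{-p'(u)}\to 0$ there), and the conservation laws you invoke will not manufacture one — note also that the conserved energy is $\int(\frac12 v^2-P(u))\,dx$ rather than $\int(\frac12 v^2+P(u))\,dx$, and it is not coercive. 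The paper argues in the opposite direction: it uses the characteristics that \emph{do} terminate on $\partial U_-$. Since the Riccati coefficient $k=-p''(u)/(4(-p'(u))^{5/4})$ is non-integrable as $u\to 0$, any characteristic hitting $\partial U_-$ at a finite forward (backward) time forces $r_x\le 0$ (resp.\ $\ge 0$) along it; a separate periodicity argument (two same-direction characteristics of the two families along which $-u\to+\infty$ would make $r_2-r_1\to\infty$ at their intersection points, contradicting boundedness of the Riemann invariants on a slice) excludes the remaining alternative, and one concludes that on every interval of hyperbolicity $u(t_0,\cdot)$ is monotone and vanishes at both endpoints, hence $U_-=\emptyset$. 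Your own Riccati step is also incorrect as stated: ``$\dot y=\alpha>0$ on $\mathbf{R}$ is impossible unless $R_x\equiv 0$'' fails (e.g.\ $y=\arctan t$); the actual mechanism is that $y=1/R_x$ must cross zero, which requires $\int\alpha\,dt$ to diverge, and the divergence can fail precisely when $u\to-\infty$ along the characteristic — the case the paper's periodicity lemma is designed to eliminate.

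The purely elliptic (really, $u\ge 0$) case is also not closed by your sketch: the Beltrami coefficient $\sqrt{p'(u)}$ degenerates wherever $u=0$, the domain is an infinite strip rather than the plane, and you have no a priori bound on $v$ as $t\to\pm\infty$, so a Liouville-type theorem does not apply off the shelf. The paper's argument here is elementary and is the idea you are missing: for any concave $f>0$ on $(0,\infty)$ with $f(0)=0$, the function $E(t)=\int_{S^1}f(u)\,dx$ satisfies $\ddot E=\int f''(u)\left((v_x)^2+p'(u)(u_x)^2\right)dx\le 0$ because $u\ge 0$ implies $p'(u)\ge 0$; a positive concave function on all of $\mathbf{R}$ is constant, whence $v_x\equiv 0$ and then $(u,v)$ is constant. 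Without a substitute of comparable strength for both of these steps, the proposal does not constitute a proof.
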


One should mention that the classical Hodograph method  \cite {CF}
can be used to linearize the system of equations. Such a
linearization leads to a second order equation of mixed type (see
recent monograph \cite {T} for extensive survey). This method works
well in neighborhood of a regular points of the mapping
$(x,y)\rightarrow (u,v)$. However in order to make this method
global one needs to analyze different domains where this change of
variables is a diffeomorphism taking into account the singularities
of the mapping $(x,y)\rightarrow (u,v)$ which might be vary
complicated (folds , cusps as well as singularities of rank 2).

Therefore our approach is different and is based on the method of
characteristics by P. Lax \cite{L} in the hyperbolic region together
with certain convexity argument in the elliptic region. Special
attention is payed to the interface between them.

In the sequel we treat the Hyperbolic domain first and then the
Elliptic one. We use the following main ingredients: Near the
boundary of the Hyperbolic region the genuine non-linearity
increases and becomes "infinite"; Analysis of Hyperbolic region is
in fact non-local and uses in a strong way periodicity assumption.
For the Elliptic region we replace maximum principle which works
well for bounded domains by a simple convexity argument.

\section*{Acknowledgements}
It is a pleasure to thank Marshall Slemrod and Steve Schochet for
valuable remarks and stimulating discussions.

\section{Hyperbolic region}
In the Hyperbolic region $u$ and $p'(u)$ are negative and the
boundary of the Hyperbolic region lies in $U_0=\{u=0\}$. We shall
denote by $\lambda_{1,2}=\pm \sqrt {-p'(u)}$ (as usual subindex 1
corresponds to the upper sign here and in the sequel) the
eigenvalues of the matrix
\begin{equation}\label{M}
 A=  \left(
  \begin{array}{cc}
   0 & 1 \\
   -p'(u) & 0
  \end{array}\right),
\end{equation}
with the Riemann invariants
$$r_{1,2}=v\mp \int_u^0 \sqrt {-p'(u)}du$$ and so $u,v$ can be recovered
from the Riemann invariants by the formulas:
\begin{equation}\label{q}
 v=\frac
{r_1+r_2}{2};\ u=q^{-1}\left(\frac{r_2-r_1}{2}\right),
\end{equation}
where by definition
$$q(u):=\int_u^0 \sqrt {-p'(s)}ds,
$$
 is a positive monotone decreasing function for $u<0$
with $$q(0)=0,\  q'(u)=-\sqrt{-p'(u)},\
q''(u)=\frac{p''(u)}{2\sqrt{-p'(u)}}.$$ It is crucial fact that both
eigenvalues are genuinely non-linear by the formulas:
$$(\lambda_1)_{r_1}=(\lambda_2)_{r_2}=\frac{p''(u)}{4p'(u)}.$$
Notice that near the boundary $\partial U_-$ the non-linearity
becomes infinite. Moreover, verifying literarily the Lax method
\cite{L} for our $p$-system one arrives to the following Ricatti
equations along characteristics of the first and the second
eigenvalues:
\begin{equation}\label{R}
L_{v_1}(\beta_1)+k\beta_1^2=0,\ L_{v_2}(\beta_2)+k\beta_2^2=0
\end{equation}
where$$\beta_1:=(r_1)_x (-p'(u))^{\frac{1}{4}};\ \beta_2:=(r_2)_x
(-p'(u))^{\frac{1}{4}} ;\quad k:=-\frac
{p''(u)}{4(-p'(u))^{\frac{5}{4}}} ,$$ and $L_{v_1},L_{v_2}$ stands
for derivatives along the first and the second characteristic fields
respectively. The following two lemmas will be very useful for the
proofs.
\begin{lemma}

(1)\ If a characteristic curve of the first or of the second
eigenvalue starting from the initial time $t_0$ reaches the boundary
$\partial U_-$ in a finite time $t_+>t_0$(respectively $t_-<t_0$),
then the derivative of the corresponding Riemann invariant $r_x$
must be non-positive (resp. non-negative) along this characteristic.

(2)\ If a characteristic curve of the first or of the second
eigenvalue extends to a semi-infinite interval $(t_0,+\infty)$
(resp.$(-\infty,t_0)$), then  for the corresponding Riemann
invariant either  $(r)_x \leq 0$ (resp.$r_x\geq 0$) or otherwise
$-u$ tends to $ +\infty$ along this characteristic curve when $t
\rightarrow +\infty$ (resp.$t \rightarrow -\infty$).

\end{lemma}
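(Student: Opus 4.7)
The plan is to exploit the Riccati equations~\eqref{R} along characteristics, together with the observation that $k<0$ throughout the hyperbolic region (since $p''>0$ and $-p'>0$). Writing $\kappa:=-k=p''(u)/\bigl(4(-p'(u))^{5/4}\bigr)>0$, equation~\eqref{R} reads $L_{v_i}(\beta_i)=\kappa\,\beta_i^{2}\ge 0$, so $\beta_i$ is non-decreasing along the $i$-th family. If $\beta_i(t_1)>0$ at some $t_1$, then $\beta_i>0$ thereafter and
\[
\frac{1}{\beta_i(t_1)}-\frac{1}{\beta_i(t)}=\int_{t_1}^{t}\kappa\bigl(u(s,x(s))\bigr)\,ds,
\]
so $\beta_i$ blows up at the first time $t^{*}$ at which the right-hand side equals $1/\beta_i(t_1)$. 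As long as $u(t^{*},x(t^{*}))<0$, the factor $(-p'(u))^{1/4}$ is positive and finite at $t^{*}$, so blow-up of $\beta_i$ is equivalent to blow-up of $(r_i)_x$, contradicting $C^{2}$-smoothness. Both parts of the lemma will follow by showing that $\int\kappa\,ds$ is large enough to force this Riccati blow-up.

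For part (1), suppose toward contradiction that $\beta_1(t_1)>0$ for some $t_1\in[t_0,t_+)$ on a first-family characteristic reaching $\partial U_-$ at $t_+<\infty$. Since $(u,v)\in C^{2}$ and the characteristic is $C^{1}$ up to and including $t_+$, the composition $s\mapsto u(s,x(s))$ is $C^{1}$ and vanishes at $s=t_+$, giving the Lipschitz bound $|u(s,x(s))|\le M(t_+-s)$ near $t_+$. The non-degenerate minimum at $u=0$ yields the expansion $-p'(u)=p''(0)|u|+O(u^{2})$, and hence $\kappa(u(s,x(s)))\ge c(t_+-s)^{-5/4}$ near $t_+$; consequently $\int_{t_1}^{t_+}\kappa\,ds=+\infty$. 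Therefore $t^{*}<t_+$, so $u(t^{*},x(t^{*}))<0$ and the contradiction of the previous paragraph applies. The argument for the second family is identical (same Riccati equation, with $\beta_2$ in place of $\beta_1$); the time-reversed cases $t_-<t_0$ are obtained by starting from $\beta_i(t_1)<0$ and using that $\beta_i$ is non-increasing as $s$ decreases, which produces a symmetric blow-up to $-\infty$ at some $s^{*}\in(t_-,t_1)$.

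For part (2), suppose the characteristic extends to $(t_0,+\infty)$ and $\beta_i(t_1)>0$. If $-u$ remains bounded by some $M>0$ along the characteristic, then $u(s,x(s))\in[-M,0)$ for all $s\ge t_1$; on the compact interval $[-M,0]$, $p''$ is bounded below by a positive constant and $-p'$ is bounded above, so $\kappa\ge c>0$ uniformly, and $\int_{t_1}^{\infty}\kappa\,ds=+\infty$ forces the Riccati blow-up and a contradiction. Hence $-u$ must be unbounded; a refinement of the same estimate, using the $C^{1}$-regularity of $u$ along the characteristic to control how long $u$ lingers at any bounded level, upgrades this to $-u\to+\infty$ as $t\to+\infty$. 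The main obstacle is the quantitative endpoint estimate in part~(1): one must combine the $C^{2}$-regularity of $u$ with the specific exponent $5/4$ in the formula for $\kappa$ in order to beat the singular integrand $(t_+-s)^{-5/4}$, and it is precisely this divergence that realises quantitatively the fact, foreshadowed in the introduction, that the genuine non-linearity becomes infinite at $\partial U_-$.
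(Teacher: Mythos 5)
Your argument is essentially the paper's: integrate the Riccati equation (\ref{R}) along a characteristic, note that $k<0$ in $U_-$, and show that positivity of $\beta$ at one time forces blow-up of $\beta$ (hence of $r_x$, since $(-p'(u))^{1/4}>0$ inside $U_-$) before the characteristic ends; the quantitative input is the Lipschitz bound $|u(s,x(s))|\le M(t_+-s)$ combined with $-p'(u)=O(|u|)$, which makes $\int\kappa\,ds$ diverge like $\int(t_+-s)^{-5/4}ds$. Part (1) and the time-reversed cases are complete and correct, and your treatment of the first half of part (2) --- if $-u$ stays bounded then $\kappa$ is bounded below on the characteristic, so $\int_{t_1}^{\infty}\kappa\,ds=+\infty$ and the Riccati solution blows up --- is clean; if anything it is tighter than the paper's corresponding step.

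The one genuine gap is the last step of part (2): you establish only that $-u$ is \emph{unbounded} along the characteristic, while the lemma asserts $-u\to+\infty$, and the stronger conclusion is what is used downstream (in Lemma 2.3 one needs $-u\to+\infty$ along $\gamma_1$ in order to conclude $-u(P_k)\to+\infty$ at the specific intersection times $t_k$). Your proposed ``refinement'' --- using $C^1$-regularity of $u$ along the characteristic to control how long $u$ lingers at a bounded level --- does not obviously close this. If $-u$ returned to $[0,M]$ along a sequence $s_n\to\infty$ while also being unbounded, the total time spent in the band $\{-2M\le u\le -M\}$ would indeed have to be finite (since $\kappa\ge c_M>0$ there), so $u$ would have to traverse that band faster and faster; but $\tfrac{d}{dt}u(t,x(t))=-v_x\pm\sqrt{-p'(u)}\,u_x$ involves $u_x$ and $v_x$, and a $C^2$ solution on an infinite time interval carries no uniform bound on these, so no contradiction follows from regularity alone. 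You would need a genuinely new estimate here. (To be fair, the paper's own proof of this step is equally brisk: it deduces from convergence of $\int\kappa\,ds$ that the integrand tends to zero, which is not valid for a general nonnegative continuous integrand; your write-up at least isolates exactly where the missing argument lives.)
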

\begin{proof}
The proof easily follows from the exact formulas for
the solutions of (\ref{R}):
$$\beta(t)=\frac{\beta(t_0)}{1+\beta(t_0)\int_{t_0}^t k(s)ds}.$$
Indeed in order to prove (1) suppose that characteristic extends to
the maximal interval $(t_0;t_+).$ Recall that the characteristics
are solutions of the ODE's $$\dot{x}=\pm\sqrt{-p'(u)},$$ so when
characteristic curve approaches the boundary of Hyperbolic domain,
so that $u$ tends to zero, then the characteristic curve must
converge to a limit point say $(t_+,x_+)$ on the boundary. Moreover,
it follows then that the integral
$$\int_{t_0}^{t_+}
k(s)ds=-\int_{t_0}^{t_+}\frac{p''(u)}{4(-p'(u))^{\frac{5}{4}}}ds$$
diverges to $-\infty$. Indeed, for $t\rightarrow t_+$ the function
$u(t,x(t))\rightarrow 0$  and can be estimated from above by
$$\abs{u(t, x(t)}\leq C_1 \abs
{t-t_+} ,$$ also for $u$ close to zero one can estimate:
$$\abs{p'(u)}=\abs{\int_u^0 p''(u)du}\leq
C_2\abs{u}, \ where\ C_2=\max_{u\in[-1,0]}p''(u).$$

So the nominator in of the integrand is bounded away from zero and
the denominator is less or equal then $C_1C_2 \abs { t-t_+
}^{\frac{5}{4}}$, thus the integral diverges. This proves the first
part of the lemma.

The second part is analogous. Indeed for an infinite characteristic
there are two possibilities. The first is when the integral diverges
to $-\infty$, in this case $r_x\leq 0$ exactly as in the previous
case. In the second case the integral converges, then in particular
the integrand must tend to zero along characteristic. Then since $p$
is strictly convex it follows that $p'(u)$ tends to $-\infty$ and so
$u$ also. Lemma is proved
\end{proof}

This lemma enables to divide between two types of characteristics
which start at $t_0$ in a positive or negative direction of time as
follows.

\begin{definition}

Let $\gamma$ be a characteristic curve defined on a \textit{maximal}
interval $[t_0,t_+)$ (or respectively $(t_-,t_0]$). We shall say
that $\gamma$ is of type $B_+$ (res. $B_-$) if $t_+=+\infty$ (resp.
$t_-=-\infty$) and $-u\rightarrow +\infty$ when $t\rightarrow
+\infty$ (resp. $t\rightarrow -\infty$).

We shall say that $\gamma$ is of type $A_+$ (resp. $A_-$) in the
opposite case. That is if either $t_+$ (resp. $t_-$) is finite, or
$t_+=+\infty$ (resp. $t_-=-\infty$) and $-u$ does not tend to
$+\infty$ when $t\rightarrow +\infty$ (resp. $t\rightarrow
-\infty$).
\end{definition}

\begin{lemma}
There cannot exist two semi-infinite characteristics in the same
direction
$$\gamma_1=(t,x_1(t)),\ \gamma_2=(t, x_2(t))$$ of the first and the
second eigenvalue belonging both to the same class $B_+$ or $B_-$.

\end{lemma}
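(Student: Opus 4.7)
My plan is to combine conservation of the Riemann invariants along each family with the $x$-periodicity of the solution. A direct computation from $r_1 = v - q(u)$, $r_2 = v + q(u)$ and the $p$-system (\ref{1}) gives $(r_i)_t + \lambda_i (r_i)_x = 0$, so $r_1$ is constant along every first-family characteristic and $r_2$ along every second-family one. In particular, along the hypothetical $\gamma_1$ one has $r_1 \equiv R_1$ and along $\gamma_2$ one has $r_2 \equiv R_2$, where $R_1, R_2$ are the values at $t_0$.

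Next I would use the class $B_+$ hypothesis to force infinitely many meetings of $\gamma_1$ and $\gamma_2$ on the cylinder. Lifting to the universal cover, the hypothesis $-u \to +\infty$ along each curve gives $\dot x_1 = \sqrt{-p'(u)} \to +\infty$ on $\gamma_1$ and $\dot x_2 = -\sqrt{-p'(u)} \to -\infty$ on $\gamma_2$, so
\[
\frac{d}{dt}\bigl(x_1(t)-x_2(t)\bigr) \;=\; \sqrt{-p'(u(t,x_1(t)))} + \sqrt{-p'(u(t,x_2(t)))} \;\to\; +\infty.
\]
Therefore $x_1(t)-x_2(t)$ eventually becomes strictly increasing and unbounded in the cover, and in particular takes the value $kL$ for every sufficiently large integer $k$ (where $L$ is the period). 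Projecting back to the cylinder yields a sequence of times $t_n \to +\infty$ at which $\gamma_1$ and $\gamma_2$ meet at a common point $(t_n, x_n)$.

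At each such intersection both conservation laws apply simultaneously: $r_1(t_n, x_n) = R_1$ and $r_2(t_n, x_n) = R_2$. Formula (\ref{q}) then pins $u(t_n, x_n) = q^{-1}((R_2 - R_1)/2)$ to a single finite value. On the other hand, $\gamma_1$ being of class $B_+$, combined with $L$-periodicity of $u$ in $x$, gives $u(t_n, x_n) = u(t_n, x_1(t_n)) \to -\infty$, the desired contradiction. The case of two class-$B_-$ characteristics follows symmetrically, either by running the same argument backward in time or by applying the involution $(u(t,x), v(t,x)) \mapsto (u(-t,x), -v(-t,x))$, which preserves the $p$-system (\ref{1}) and interchanges both the two families and the two classes. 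The only point I expect to require real care is the crossing step itself: one must lift to the universal cover and check that the divergence of the relative speed $\dot x_1 - \dot x_2$ is strong enough to force $x_1 - x_2$ to sweep through every sufficiently large multiple of $L$, not merely wind indefinitely on the cylinder; once this is in place the rest is an immediate consequence of Riemann-invariant conservation and the definition of class $B_+$.
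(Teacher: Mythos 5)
Your argument is correct and essentially the same as the paper's: both proofs force infinitely many intersections of $\gamma_1$ with integer shifts of $\gamma_2$ at times tending to $+\infty$ and then contradict the constancy of the Riemann invariants along the two curves (the paper phrases the contradiction as $r_2-r_1$ being simultaneously unbounded and bounded at the intersection points, you as $u$ being pinned at the finite value $q^{-1}((R_2-R_1)/2)$ while also tending to $-\infty$). One small correction: $-u\to+\infty$ does not by itself force $\sqrt{-p'(u)}\to+\infty$ under the paper's hypotheses on $p$ (e.g.\ $p(u)=e^u-u-1$), but the relative speed $\dot x_1-\dot x_2$ is eventually bounded below by a positive constant since $-p'$ is increasing in $-u$, and that is all your sweeping step actually needs.
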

\begin{proof}

Assume on the contrary that there exist such $\gamma_1=(t,x_1(t)),\
\gamma_2=(t, x_2(t))$ belonging to the same class, say $B_+$, so
that $-u|_{\gamma_{1,2}}\rightarrow +\infty$ when $t\rightarrow
+\infty$.

 Then
by periodicity we can shift the characteristics to get
$$\gamma_1^{(k)}=(t,x_1(t)+k),\  \gamma_2^{(l)}=(t,x_2(t)+l)$$ for all
$k,l \in \mathbf{Z}$ are characteristics of class $B_+$ also. Since
the functions $x_1,x_2$ are solutions of the ODEs $$\dot{x}=\pm
\sqrt{-p'(u)}$$ respectively, it follows that $x_1$ (respectively
$x_2$) are strictly monotone increasing (resp.decreasing) function
with the derivative bounded away from zero. Therefore for
sufficiently large $k$ the characteristics $\gamma_1$ and
$\gamma_2^{(k)}$  must intersect in a unique point, call it $P_k$.
Denote by $t_k$ the $t$-coordinates of $P_k$. One can see that $t_k$
is monotone increasing and must tend to $+\infty$. Indeed in the
opposite case there exist limits $t_k\nearrow t_*$ and
$P_k\rightarrow P_*$ so that by intermediate value theorem $p'(u)$
is unbounded on the compact segment $[t_0;t_*]$, contradiction.

Therefore we have,
$$-u(t_k,x(t_k))\rightarrow+\infty ,\ k\rightarrow+\infty,$$
and then by formula (\ref{q}) also
$$ r_2(P_k)-r_1(P_k)\rightarrow+\infty ,\ k\rightarrow+\infty.$$
 But this is not possible since by periodicity in $x$ of $(u,v)$ one
 has that
 $r_1(t_0,x)$ and $r_2(t_0,x)$ are bounded, and so by conservation of $r_1,r_2$
 along characteristics $
 r_2(P_k)-r_1(P_k)$ must be bounded also. This contradiction proves the lemma.
\end{proof}

Let me prove now Theorem 1.1.
\begin{proof}
Introduce $$t'= \sup \{t: [t_0,t]\times \mathbf{S}^1 \subseteq
U_-\},
$$ this means that $t'$ is the first moment where non-Hyperbolic
type appears. In other words $u$ must vanish at some point on
$\{t'\} \times \mathbf{S}^1.$  Write $U'_-=[t_0,t')\times
\mathbf{S}^1$. We prove that $t'$ equals in fact to $+\infty$.
Indeed,
 it follows from the second lemma that all characteristics of at least one of the
 eigenvalues are of class $A_+$. Without loss of generality let it be the family of the second
 eigenvalue with this property. Then it follows from the lemma 2.1 that
 $$(r_2)_x(t_0,x)\leq 0,
$$
for every $x$. But by periodicity this is possible only when
$r_2(t_0,x)$ is in fact constant for the initial moment and so also
everywhere on the whole $U'_-$. This means that within the domain
$U'_-$ only $r_1$ can vary. But then $u$ is a function of $r_1$ only
and therefore has constant values along characteristics of the first
eigenvalue. By the construction there exists a point on $\{t'\}
\times \mathbf{S}^1$ where $u$ vanishes. Since there exists a
characteristic of the first family terminating at this point, then
$u$ vanishes also on the whole characteristic. But this is a
contradiction, since $u$ must be strictly negative on $U_-$. This
implies that the hyperbolic domain $U_-$ coincides with the whole
semi-infinite cylinder $t\geq t_0$.

Furthermore since we  know that $r_2$ is a constant on the whole
half cylinder then $u$ depends only on $r_1$ and has constant values
along characteristics of the first family (in particular $-u$ does
not tend to infinity) so by Lemma 2.1 this implies
$$(r_1)_x\leq 0.$$ Using periodicity again we conclude that $r_1$ is constant
also everywhere on the half-cylinder. Thus $(u,v)$ is a constant
solution.
 We are done.
\end{proof}

The idea of the proof of the second theorem is somewhat similar. The
difference is that one should take into account both directions of
characteristics of the Hyperbolic domain. Also in this case elliptic
domains cannot be excluded as before, one needs an additional tool.
In the next section we treat the Elliptic region. These two steps
provide the proof of theorem 1.2. The first step is the following:

\begin{theorem}
Let $(u,v)$ be a $C^2$-solution of the system on the infinite
cylinder. Then either $U_-$ coincides with the whole cylinder and
$u,v$ are constants everywhere, or $U_-$ is empty, i.e $u \geq 0$
everywhere.
\end{theorem}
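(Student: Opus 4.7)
The plan is to argue by contradiction. Assume $U_-$ is nonempty; we want to show it must coincide with the whole cylinder, after which Theorem 1.1 applied forward from any $t_0$ (and its time-reversed version applied backward) forces $(u,v)$ to be constant. So suppose for contradiction that $U_-$ is a proper nonempty open subset. A first observation: no full time slice $\{t_0\} \times \mathbf{S}^1$ can lie inside $U_-$; otherwise Theorem 1.1 forward and its time-reversed counterpart would give $u \equiv \mathrm{const} < 0$ on the whole cylinder, contradicting properness. Hence for every $t$, the set $U_- \cap (\{t\} \times \mathbf{S}^1)$ is a proper open subset of $\mathbf{S}^1$, a disjoint union of open arcs whose endpoints lie in $U_0$.

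The heart of the argument is a case analysis driven by Lemma 2.2: in the whole cylinder, all $B_+$-characteristics (if any) belong to a single family $f_+$, and similarly all $B_-$ to a single family $f_-$. Case (a) is when $f_+$ and $f_-$ can be chosen equal, leaving the other family $o$ free of any $B_\pm$-rays. Then for every $P \in U_-$ both the forward and backward $o$-characteristics through $P$ are finite or of class $A_\pm$, so Lemma 2.1 gives both $(r_o)_x(P) \leq 0$ and $(r_o)_x(P) \geq 0$, i.e.\ $(r_o)_x \equiv 0$ on $U_-$. Combined with the transport $(r_o)_t + \lambda_o (r_o)_x = 0$, this forces $(r_o)_t \equiv 0$ as well, so $r_o$ is constant on each connected component $W$ of $U_-$. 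Then $u$ depends only on $r_{3-o}$ on $W$ and is constant along $(3-o)$-characteristics there; repeating the dichotomy for these characteristics (a finite exit through $\partial W \subseteq U_0$ would contradict $u \equiv u(P_0) < 0$ along the ray, and a $B$-type ray would contradict the same constancy against $-u \to \infty$) shows they are all of type $A_\pm$, which forces $(r_{3-o})_x \equiv 0$ and thus $r_{3-o}$ constant on $W$ as well. Hence $u \equiv c < 0$ on $W$, contradicting $u|_{\partial W} \equiv 0$.

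Case (b) is when $f_+ \neq f_-$, say $f_+ = 1$ and $f_- = 2$. Then at every $P \in U_-$, the forward $2$-characteristic is finite or $A_+$, giving $(r_2)_x(P) \leq 0$, while the backward $1$-characteristic is finite or $A_-$, giving $(r_1)_x(P) \geq 0$. Using $u = q^{-1}((r_2 - r_1)/2)$ together with $(q^{-1})' < 0$ on $U_-$, these signs combine into $u_x \geq 0$ throughout $U_-$. On any arc $(a, b)$ of a time slice of $U_-$ one has $u(t, a) = u(t, b) = 0$ with $u(t, \cdot) < 0$ strictly in between, which is incompatible with $u$ being non-decreasing in $x$. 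The symmetric subcase $f_+ = 2,\, f_- = 1$ yields $u_x \leq 0$ and is excluded identically.

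The main obstacle is Case (a): one has to leverage the absence of $B$-type rays in the free family to bootstrap from $(r_o)_x \equiv 0$ all the way to constancy of both Riemann invariants on the component $W$, essentially performing the endgame of the proof of Theorem 1.1 globally inside the (possibly irregular) $W$ rather than on a strip $[t_0, t') \times \mathbf{S}^1$. Case (b), by contrast, is a clean monotonicity-versus-boundary-values argument once the signs of $(r_i)_x$ are fixed by Lemma 2.1.
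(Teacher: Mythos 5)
Your proof is correct and runs on the same machinery as the paper's (Lemmas 2.1 and 2.3, the $A/B$ dichotomy for characteristics, and the monotonicity--versus--boundary-values contradiction on the arcs of hyperbolicity), but it departs from the paper in two places, both to good effect. First, you organize the case split globally: either one family carries no $B$-rays anywhere on the cylinder, or both families carry $B$-rays, necessarily of opposite temporal direction by Lemma 2.3. The paper's Case 1 is instead phrased slice by slice ("for any initial moment $t_0$ at least one eigenvalue has all its characteristics of class $A$"), which leaves the small ambiguity that the distinguished eigenvalue could a priori change with $t_0$ before one concludes that a single Riemann invariant is constant on a whole component; your formulation removes that ambiguity while remaining exhaustive. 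Second, your endgame in Case (a) is genuinely different: after $r_o$ is constant on a component $W$, the paper argues that the characteristics of the other family are straight lines, hence horizontal, hence $\lambda_1=0$; you instead note that constancy of $u$ along those characteristics excludes both finite exit through $\partial W\subseteq U_0$ and type $B$, so Lemma 2.1 applies in both time directions, forces the second Riemann invariant to be constant too, and yields $u\equiv c<0$ on $W$ against $u=0$ on $\partial W$. This reuse of Lemma 2.1 is, if anything, easier to make airtight than the straight-line argument. Case (b) coincides with the paper's Case 2 (the only point worth flagging is your appeal to a time-reversed Theorem 1.1 in the preliminary step, which is legitimate since the system is invariant under $(t,v)\mapsto(-t,-v)$, a symmetry the paper itself uses implicitly).
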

\begin{proof}To give a proof assume that $U_-$ is not the whole cylinder (otherwise Theorem 1.1 yields the result).
We need to show that $u\geq 0$, everywhere on the cylinder. Fix a
connected component, denote it $U'_-$, and take any initial moment
$t_0$ with the property that the intersection of $\{t=t_0\}$ with
component $U'_-$ is not empty. It consists of the disjoint union of
open intervals- intervals of Hyperbolicity (the case when it is the
whole circle is covered already by theorem 1.1).

Consider the following 2 complementary cases:

Case1. \textit{For any initial moment $t_0$ with the property
$\{t=t_0\}\cap U_-\neq \emptyset$, for at least one of the
eigenvalues (say for the second one) all characteristics started
from $t_0$ in positive and negative direction belong to the classes
$A_+,A_-$. }

In this case by the first lemma $r_2(t_0,x)$ is constant. Since
$t_0$ is arbitrary this implies that $r_2$ is constant on every
connected component of $U_-$. This implies that only $r_1$ varies on
every component and so $u,v, \lambda_1,\lambda_2$ are functions of
$r_1$ only. This means that $u$ keeps constant values along
characteristics of the first eigenvalue. Therefore every such
characteristic can be extended infinitely in both directions because
if it reaches the boundary of the Hyperbolic region $u$ must have
value zero on the whole characteristic, which contradicts
Hyperbolicity. Moreover, since $\lambda_1$ is a function of $r_1$
only so that it is constant along $\lambda_1$-characteristics, then
these characteristics are parallel horizontal straight lines,
\{$x=const$\}. But then $\lambda_1$ must be zero and so again $u=0$,
contradiction.

Case2. \textit{There exists $t_0$ such that for each eigenvalue
$\lambda_1, \lambda_2$ there exists a characteristic of class $B$
started at $t_0$ in some direction.}

It follows from the second lemma that the directions of these
characteristics  must be opposite. So assume without loss of
generality that $\gamma_1,\ \gamma_2$ are $\lambda_1,\
\lambda_2$-characteristics in the classes $B_+,\ B_-$ respectively.
Then it follows from the lemmas that the characteristics $\gamma_1,\
\gamma_2$ being extended beyond $t_0$ in the negative and positive
direction respectively belong to classes $A_-, A_+$ respectively.
And thus by the first lemma
$$(r_1)_x(t_0,x)\geq 0,\ (r_2)_x(t_0,x)\leq 0,$$ for all $x$ in the
intervals of Hyperbolicity. So in this case $(r_1-r_2)(t_0,x)$ is a
monotone function in $x$. Then also $u(t_0,x)$ is monotone  by the
formula (\ref{q}) and since $u$ vanishes at the ends of the
intervals of Hyperbolicity, $u$ must vanish identically. This
contradiction completes the proof of the theorem.

\end{proof}

\section{Elliptic case}
\begin{theorem}
Suppose $(u,v)$ is a $C^2$-solution of the system on the whole
cylinder satisfying $u\geq 0$ everywhere. Then $u,v$ are constants.
\end{theorem}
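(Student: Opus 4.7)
My plan is to implement the ``simple convexity argument'' announced by the author by integrating a well-chosen nonlinear functional of $u$ over the period. The naive candidate $\int u^{2}\,dx$ is convex in $t$ but has no a priori upper bound on an infinite cylinder, so instead I will use a strictly convex $\Phi:[0,\infty)\to\mathbf{R}$ that is \emph{bounded above} on $[0,\infty)$; for concreteness take $\Phi(u)=e^{-u}$ (so $\Phi''>0$ and $0<\Phi\le 1$). Define
$$G(t) := \int_{\mathbf{S}^{1}} \Phi\bigl(u(t,x)\bigr)\,dx,$$
so that $0<G(t)\le L$ uniformly for $t\in\mathbf{R}$, where $L$ is the spatial period.

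The first step is to differentiate $G$ twice in $t$, using $u_{t}=-v_{x}$ and $v_{t}=(p(u))_{x}$ and integrating by parts in $x$ (boundary terms vanish by periodicity). The calculation is routine and the cubic terms carrying $\Phi'''(u)$ cancel, leaving the identity
$$G''(t) = \int_{\mathbf{S}^{1}} \Phi''(u)\bigl(v_{x}^{2}+p'(u)\,u_{x}^{2}\bigr)\,dx.$$
Under the hypothesis $u\ge 0$ one has $p'(u)\ge 0$ (since $p'(0)=0$ and $p''>0$), and $\Phi''(u)>0$ by construction, so $G''(t)\ge 0$: the function $G$ is convex on all of $\mathbf{R}$.

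Next I will apply the elementary Liouville-type fact that a convex function on the whole real line which is bounded above must be constant (otherwise a supporting line produces a linear lower bound that drives $G$ to $+\infty$ in one direction). This yields $G''\equiv 0$, and since $\Phi''(u)>0$ pointwise, the non-negative integrand must vanish identically, giving $v_{x}\equiv 0$ and $p'(u)\,u_{x}\equiv 0$ throughout the cylinder.

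Finally, $v_{x}\equiv 0$ forces $v=v(t)$, and then $u_{t}=-v_{x}=0$ gives $u=u(x)$; substituting into $v_{t}=(p(u))_{x}$, the left side depends only on $t$ and the right only on $x$, so both equal a common constant $c$, and the periodicity of $p(u)$ in $x$ forces $c=0$. Since $p$ is strictly increasing on $[0,\infty)$ (because $p'(0)=0$ and $p''>0$), $p(u(x))\equiv\text{const}$ implies $u$ constant, whence $v$ is constant as well. The only genuine obstacle I foresee is the choice of $\Phi$: any convex $\Phi$ gives the same structural formula for $G''$, but without an a priori upper bound on $G$ the Liouville step collapses; the trick is to exploit the one-sided sign condition $u\ge 0$ by using a \emph{decreasing} convex $\Phi$, which is precisely what replaces the elliptic maximum principle in this unbounded setting.
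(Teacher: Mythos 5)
Your proof is correct and is essentially the paper's own argument up to a sign flip: the paper takes a positive concave $f$ with $f(0)=0$, so that $E(t)=\int_{\mathbf{S}^1}f(u)\,dx$ is concave and bounded below on $\mathbf{R}$, hence constant, while you take a bounded convex decreasing $\Phi$ so that $G$ is convex and bounded above — the same second-derivative identity and the same Liouville step for one-sidedly bounded convex functions. Your final paragraph deducing $u,v=\mathrm{const}$ from the vanishing of the integrand is actually spelled out more carefully than in the paper, which simply asserts this conclusion.
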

If the elliptic domain is bounded one could use the maximum
principle for the proof. But in general the following tool can be
used:
\begin{proof}
Take any function $f:\mathbf{R}_+\rightarrow\mathbf{R}_+$ satisfying
$$f(0)=0, \ f(u)>0 \ for \ all\ u>0\  and \ f''(u)<0 \ for \ all\  u \geq 0.$$
Introduce $$E(t)=\int_{S^1}f(u(t,x))dx,$$ which by the construction
is a positive function of $t$ unless $u$ is identically zero.
Compute the second derivative of $E$ using the $p$-system and
integration by parts. We have
$$\ddot{E}=\int_{S_1}f''(u)\left((v_x)^2+p'(u)(u_x)^2\right)dx.$$
Since $u$ is non-negative then $p'(u)$ is non-negative also, since
$p$ is quadratic like by the assumption. So we get that $E$ is a
positive concave function and thus must be constant. Then obviously
$u,v$ are constants everywhere.
\end{proof}

\end{document}